\documentclass[preprint]{elsarticle}
\biboptions{sort&compress}
\usepackage{graphicx} 
\usepackage{hyperref}
\usepackage{amsmath, amssymb, amsfonts}
\usepackage{tikz}
\usepackage{algorithm} 
\usepackage{algpseudocode}

\newtheorem{theorem}{Theorem}[section]
\newtheorem{corollary}[theorem]{Corollary}
\newtheorem{lemma}[theorem]{Lemma}
\newtheorem{proposition}[theorem]{Proposition}

\newdefinition{definition}{Definition}
\newdefinition{remark}{Remark}

\newproof{pot}{\textnormal{\textbf{Proof}}}

\newenvironment{proof}{\begin{pot}}{\qed \end{pot}}

\begin{document}

\begin{frontmatter}
    \title{The Mutual-Visibility Problem In\\Directed Graphs}

    \author[1]{Vanja Stojanovi\'c \corref{cor1}}
    \ead{vs66277@student.uni-lj.si}
    \cortext[cor1]{Corresponding author}

    \affiliation[1]{organization={Faculty of Mathematics and Physics, University of Ljubljana},
    country={Slovenia}}

    \begin{abstract}
    The study of mutual visibility has traditionally focused on undirected graphs, asking for the maximum number of vertices that can communicate via shortest paths without intermediate interference from other set members. In this paper, we extend this concept to directed graphs, establishing fundamental results for several graph classes. We prove that for Directed Acyclic Graphs (DAGs), the mutual-visibility number $\mu(D)$ is always 1, and for directed cycles of length $n\ge3$, it is strictly 2. In contrast, we demonstrate that tournaments can support arbitrarily large mutual-visibility sets; specifically, using properties of Paley tournaments, we show that $\mu(T)$ grows linearly with the size of the tournament. On the algorithmic side, we show that while verifying a candidate set is polynomial-time solvable ($O(|S|(|V|+|A|))$), the problem of determining $\mu(D)$ is NP-hard for general digraphs. We also analyze the impact of strong bridges and strongly connected components on the upper bounds of $\mu(D)$.
    \end{abstract}
    
    \begin{keyword}
    Mutual visibility \sep Directed graphs \sep Tournaments \sep Paley tournaments \sep Shortest paths \sep Computational complexity
    
    \MSC[2020] 05C20 \sep 05C12 \sep 05C85 \sep 68Q25
    \end{keyword}
\end{frontmatter}

\section{Introduction}

The \emph{mutual‐visibility} problem on graphs asks for a largest set of vertices such that any two in the set are “mutually visible,” meaning there is at least one shortest path between them whose internal vertices lie outside the set. Formally, a set $S\subseteq V(G)$ is a \emph{mutual‐visibility set} if for every $x,y\in S$, $x\neq y$, there exists a shortest $x$,$y$-path in $G$ with no internal vertex in $S$.  The size of a maximum mutual‐visibility set is the \emph{mutual‐visibility number} $\mu(G)$. It was first defined in \cite{DISTEFANO2022126850} by Di Stefano.

Di Stefano's seminal work \cite{DISTEFANO2022126850} explicitly motivates further exploration into the mutual-visibility problem, particularly within the context of directed graphs. This specific area currently exhibits a notable scarcity of dedicated research. Our work aims to address this gap by establishing fundamental results concerning mutual-visibility in directed graphs.

\paragraph{Preliminaries} A \emph{graph} $G = (V, E)$ consists of a finite vertex set $V$ and an edge set $E$ of unordered pairs of distinct vertices. A \emph{directed graph} (or \emph{digraph}) $D = (V, A)$ consists of a finite vertex set $V$ and a set $A$ of ordered pairs $(u, v)$ of distinct vertices named directed edges. We discuss only simple graphs, which have no parallel edges.

When we remove edge orientations from a digraph $D$, we obtain the underlying simple undirected graph $G$ of $D$.
If $X$ is a subset of a graph or digraph $G$, then we denote the complement as $\overline X = V(G) \setminus X$.

A vertex $v$ is \emph{reachable} from $u$ in a digraph $D$, denoted $u \rightarrow v$, if there exists a path from $u$ to $v$. A \emph{path} $P = (x_0, x_1, \ldots, x_k)$ from $u=x_0$ to $v=x_k$ is a sequence of distinct vertices such that $(x_i, x_{i+1}) \in A$ for all $0 \le i < k$. The \emph{length} of $P$ is $k$. The distance from $u$ to $v$ in $D$, denoted by $d_D(u, v)$ (or simply $d(u, v)$), is the length of a shortest directed path from $u$ to $v$. If no such path exists, $d(u, v) = \infty$. The internal vertices of $P$ are $x_1, \ldots, x_{k-1}$. The set of all paths from $u$ to $v$ in $D$ is denoted by $\mathcal{P}_{D}(u, v)$. In directed graphs, the path is named a directed path. Two vertices $u$ and $v$ are \emph{mutually reachable} if $u \rightarrow v$ and $v \rightarrow u$, we denote such a relation with $u \leftrightarrow v$.

For a subset of vertices $X \subseteq V$, the subgraph induced by $X$, denoted $D[X]$, is the digraph with vertex set $X$ and edge set $A \cap (X \times X)$. We denote by $D - X$ the subgraph induced by $V \setminus X$. Similarly, for an edge $e \in A$, $D - e$ denotes the digraph $(V, A \setminus \{e\})$. A strongly connected component of a digraph is a maximal connected subgraph in which vertices are pairwise mutually reachable.

We define a complete directed graph on $n$ vertices as $K_n$, where $V(K_n) = \{v_1, ..., v_n\}$ and $A = \{(v_i, v_j)\ |\ \forall i,j\in [n]\}$

\paragraph{Related work} The concept of mutual visibility was introduced by Di Stefano \cite{DISTEFANO2022126850} in the context of undirected graphs. This work was motivated by problems in robot navigation and communication networks, where entities must maintain lines of sight or communication channels without interference from other entities in the swarm. Di Stefano established that the mutual-visibility number is related to, but distinct from, other metric invariants such as the general position number and the geodetic number.

Following this seminal work, Cicerone et al. \cite{CICERONE2023114096} expanded the field by investigating a variety of mutual-visibility problems, considering different constraints on the paths (e.g., all shortest paths vs. at least one shortest path). They provided bounds for grids, orbital graphs, and other specific topologies.

However, the literature has remained focused almost exclusively on undirected graphs. To the best of our knowledge, the problem of mutual visibility in directed graphs has not been addressed. The transition to directed graphs introduces significant structural challenges: distance is no longer symmetric ($d(u,v) \neq d(v,u)$), and the existence of a path does not guarantee a return path. These asymmetries render many undirected techniques inapplicable. For instance, while removing an edge in a cycle simply increases distance in the undirected case, it destroys strong connectivity entirely in the directed case. Our work aims to bridge this gap by defining the problem for digraphs and characterizing $\mu(D)$ for fundamental directed structures. Consider the practical implications of this asymmetry in communication networks. In an undirected model, a link between two nodes implies a bidirectional channel. However, in real-world systems like optical networks with isolators or traffic grids with one-way streets, visibility is not inherently reciprocal. A node $u$ might "see" $v$, but $v$ may require a completely different, longer route to see $u$.

Finally, we distinguish our contributions from recent parallel developments in the undirected setting. Notably, Roy et al. \cite{roy2025vertexvisibilitynumbergraphs}recently analyzed the vertex visibility number, linking the visibility of a subset $S$ from a fixed root $x$ to the leaves of a shortest-path tree. While their work provides significant insights into the structure of visibility in undirected graphs, their reliance on shortest-path trees is inherently symmetric. In contrast, our work addresses the strictly directed case, where path asymmetry ($u \to v \nleftrightarrow v \to u$) prevents the formation of standard shortest-path trees, requiring a fundamentally different structural analysis based on strong connectivity and cycle decomposition.

\section{Mutual-Visibilities and Properties in Directed Graphs}
\label{sec:mut_visibilities}

We first define mutual-visibility, adapted to directed graphs.

\begin{definition}
    In a digraph $D$ two vertices are mutually-visible, if there exists a directed shorted path between them, without further selected vertices.
\end{definition}

\noindent
Following the definitions of \cite{CICERONE2023114096}, we also define mutual-visibility sets.

\begin{definition}[Mutual-visibility set]
    \label{def:directed_mv_set}
    In a digraph $D$, a set $S\subseteq V(D)$ is a mutual-visibility set, if $\forall x,y \in S$ there exist shortest directed $x,y$-path $P$ and $y,x$-path $Q$ such that $(S \cap V(P)) \cup (S \cap V(Q)) = \{x,y\}$. The size of a maximum such set is denoted by $\mu(D) = \max_{S\subseteq D}|S|$ and known as the mutual-visibility number of $D$.
\end{definition}

\begin{remark}
    We say that vertices of a mutual-visibility set $S$ are $S$-visible.
\end{remark}

\begin{definition}[Total mutual-visibility set]
    In a digraph $D$, a set $S\subseteq V(D)$ is a total mutual-visibility set if $\forall x,y \in V(D)$ there exists shortest directed $x,y$-path $P$ and $y,x$-path $Q$ such that $(S \cap V(P)) \cup (S \cap V(Q)) = \{x,y\}$. The size of a maximum such set is denoted by $\mu_t(D) = \max_{S\subseteq D}|S|$ and known as the total mutual-visibility number of $D$.
\end{definition}

\begin{definition}[Outer mutual-visibility set]
    In a digraph $D$, a set $S\subseteq V(D)$ is an outer mutual-visibility set if $\forall x,y \in S$ there exists shortest directed $x,y$-path $P$ and $y,x$-path $Q$ such that $(S \cap V(P)) \cup (S \cap V(Q)) = \{x,y\}$, and $\forall v\in S\wedge \forall u \in \overline S$ there exists shortest directed $v,u$-path $R$ and $u,v$-path W such that $(S \cap V(R)) \cup (S \cap V(W)) = \{u,v\}$. The size of a maximum such set is denoted by $\mu_o(D) = \max_{S\subseteq D}|S|$ and known as the outer mutual-visibility number of $D$.
\end{definition}

\begin{definition}[Dual mutual-visibility set]
    In a digraph $D$, a set $S\subseteq V(D)$ is a dual mutual-visibility set if $\forall x,y \in S$ there exists shortest directed $x,y$-path $P$ and $y,x$-path $Q$ such that $(S \cap V(P)) \cup (S \cap V(Q)) = \{x,y\}$, and $\forall v,u \in \overline S$ there exists shortest directed $v,u$-path $R$ and $u,v$-path $W$ such that $(S \cap V(R)) \cup (S \cap V(W)) = \{u,v\}$. The size of a maximum such set is denoted by $\mu_d(D) = \max_{S\subseteq D}|S|$ and known as the dual mutual-visibility number of $D$.
\end{definition}

For further notation, let $D$ always denote a digraph. From these adapted definitions we notice that there must be at least two directed paths to ensure that $v \leftrightarrow u$. It is then trivial, but important, to see the following relation described in Proposition \ref{prop:mut_set_in_D_and_G}

\begin{proposition}\label{prop:mut_set_in_D_and_G}
    If $S \subseteq D$ is a mutual-visibility set in $D$, and $G$ is the undirected graph of $D$, then $S$ is not necessarily a mutual-visibility set in $G$.
\end{proposition}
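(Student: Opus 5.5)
The plan is to exhibit one explicit counterexample: a digraph $D$ and a set $S$ that is a mutual-visibility set in $D$ in the sense of Definition~\ref{def:directed_mv_set}, but such that in the underlying graph $G$ some pair of vertices of $S$ has all of its shortest paths passing through a third vertex of $S$.

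The idea is to exploit the fact that orientations can make a short undirected route useless in $D$. I will take three vertices $S=\{x,y,z\}$ and orient both edges at $z$ outward, with arcs $(z,x)$ and $(z,y)$. Then in $G$ the path $x,z,y$ has length $2$. If I ensure that $x$ and $y$ have no other common neighbour and are not adjacent, this is the \emph{unique} shortest $x,y$-path in $G$, and it has the internal vertex $z\in S$. In $D$, however, this route cannot be traversed in either direction between $x$ and $y$.

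I then add auxiliary vertices outside $S$ to provide directed routes. For $x$ to $y$ I use the path $x\to a\to b\to y$. For $y$ to $x$ I use $y\to c\to d\to x$. For $x$ to $z$ I use $x\to e\to z$, and for $y$ to $z$ I use $y\to f\to z$. The arcs $z\to x$ and $z\to y$ handle the two directions out of $z$.

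The verification step, which I expect to be the only place needing care, is checking that each of these paths is actually a \emph{shortest} directed path. Other directed routes of equal length may exist and may run through $S$; for example, $x\to e\to z\to y$ also has length $3$. This is harmless, because the definition only requires that \emph{some} shortest path avoid $S$ internally.

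Concretely, I will do a short breadth-first search in $D$ from each of $x$, $y$, $z$. This should give the following distances:
\begin{itemize}
\item $d(x,y)=3$ and $d(y,x)=3$, since the out-neighbours of $x$ are only $a,e$ and those of $y$ are only $c,f$;
\item $d(x,z)=2$ and $d(y,z)=2$;
\item $d(z,x)=d(z,y)=1$.
\end{itemize}
So each listed path is geodesic with no internal vertex in $S$, and $S$ is a mutual-visibility set of $D$.

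Finally, in $G$ the neighbourhoods are $N(x)=\{a,d,e,z\}$ and $N(y)=\{b,c,f,z\}$. These intersect only in $z$, and $x,y$ are non-adjacent, so $d_G(x,y)=2$ and every shortest $x,y$-path passes through $z$. Hence $S$ is not a mutual-visibility set in $G$, which proves the proposition.
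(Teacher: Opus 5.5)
Your counterexample is correct. In $D$ the out-neighbourhoods are $N^+(x)=\{a,e\}$, $N^+(y)=\{c,f\}$, $N^+(z)=\{x,y\}$, and the in-neighbourhoods are $N^-(y)=\{b,z\}$, $N^-(x)=\{d,z\}$. Since $N^+(x)\cap N^-(y)=N^+(y)\cap N^-(x)=\emptyset$, you get $d(x,y)=d(y,x)=3$. Note that it is this disjointness, and not the out-neighbourhoods alone as your justification suggests, that gives these distances. Hence $x\to a\to b\to y$, $y\to c\to d\to x$, $x\to e\to z$, $y\to f\to z$, $z\to x$ and $z\to y$ are geodesics with no internal vertex in $S$. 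In $G$, $z$ is the only common neighbour of the non-adjacent $x,y$.

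Your strategy is the same as the paper's: one explicit digraph with $S=\{x,y,z\}$, where $z$ is the midpoint of the unique undirected $x,y$-geodesic. The concrete example differs at exactly the point that matters, and the paper's 8-vertex example does not actually work. As drawn ($y\to z$, $x\leftrightarrow z$, and a bidirected path $x v_5\cdots v_1 y$), the unique shortest $y,x$-path is $y\to z\to x$. Under the caption's reading ($x\to z$, $y\to z$), $z$ is a sink.

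In fact no orientation of that 8-cycle can work. Vertex $z$ needs both an in- and an out-neighbour among $\{x,y\}$. That always produces a directed $x,y$- or $y,x$-path of length $2$ through $z$, strictly shorter than the length-$6$ detour, so it is the unique geodesic in that direction and it is blocked. Your device is precisely what is needed: you orient both arcs at $z$ outward, so the undirected shortcut is unusable in either direction. You then give access to $z$ through private vertices $e,f$, which create no new common neighbour of $x$ and $y$ in $G$. Your proof is therefore a valid replacement for the paper's argument, not merely an alternative to it.
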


\begin{proof}
    Take a simple directed cycle $D$ as shown on the left and its undirected variant $G$ shown on the right in Figure \ref{fig:directed_graph_8_vertices} below. Notice that the mutual-visibility set $\{x,y,z\}$ in $D$ is not a mutual-visibility set in $G$:

    \begin{figure}[ht!]
    \centering
    \begin{tikzpicture}[
        scale=1,
        every node/.style={circle, draw, fill=black, minimum size=4pt, inner sep=1pt},
        font=\small,
        ->, 
        >=stealth 
    ]
        \begin{scope}
            \node[label=below:$x$] (x) at (0, 0) {};
            \node[label=above:$z$] (z) at (0.8, 0.5) {};
            \node[label=below:$y$] (y) at (1.6, 0) {};
            
            \def\radius{2.2cm}
            \def\centerx{0.8cm}
            \def\centery{0cm}
            
            \node (v1) at ({\centerx + \radius * cos(15)}, {\centery + \radius * sin(15)}) {};
            \node (v2) at ({\centerx + \radius * cos(45)}, {\centery + \radius * sin(45)}) {};
            \node (v3) at ({\centerx + \radius * cos(90)}, {\centery + \radius * sin(90)}) {};
            \node (v4) at ({\centerx + \radius * cos(135)}, {\centery + \radius * sin(135)}) {};
            \node (v5) at ({\centerx + \radius * cos(165)}, {\centery + \radius * sin(165)}) {};
            
            \draw[->, thick] (y) -- (z);
            
            \draw[<->, thick] (x) -- (z);
            \draw[<->, thick] (x) -- (v5);
            \draw[<->, thick] (v5) -- (v4);
            \draw[<->, thick] (v4) -- (v3);
            \draw[<->, thick] (v3) -- (v2);
            \draw[<->, thick] (v2) -- (v1);
            \draw[<->, thick] (v1) -- (y);
            
            \node[draw=none, fill=none] at (0.8, -1.2) {$D$};
        \end{scope}
        
        \begin{scope}[xshift=6cm]
            \node[label=below:$x$] (x2) at (0, 0) {};
            \node[label=above:$z$] (z2) at (0.8, 0.5) {};
            \node[label=below:$y$] (y2) at (1.6, 0) {};
            
            \def\radius{2.2cm}
            \def\centerx{0.8cm}
            \def\centery{0cm}
            
            \node (v12) at ({\centerx + \radius * cos(15)}, {\centery + \radius * sin(15)}) {};
            \node (v22) at ({\centerx + \radius * cos(45)}, {\centery + \radius * sin(45)}) {};
            \node (v32) at ({\centerx + \radius * cos(90)}, {\centery + \radius * sin(90)}) {};
            \node (v42) at ({\centerx + \radius * cos(135)}, {\centery + \radius * sin(135)}) {};
            \node (v52) at ({\centerx + \radius * cos(165)}, {\centery + \radius * sin(165)}) {};
            
            \draw[-, thick] (x2) -- (z2);
            \draw[-, thick] (y2) -- (z2);
            
            \draw[-] (x2) -- (v52);
            \draw[-] (v52) -- (v42);
            \draw[-] (v42) -- (v32);
            \draw[-] (v32) -- (v22);
            \draw[-] (v22) -- (v12);
            \draw[-] (v12) -- (y2);
            
            \node[draw=none, fill=none] at (0.8, -1.2) {$G$};
        \end{scope}
        \end{tikzpicture}
        \caption{Directed cycle with 8 vertices: $x$ and $y$ both direct to $z$, with remaining vertices forming a bidirectional path from $x$ to $y$. And, an undirected 8-cycle variant.}
        \label{fig:directed_graph_8_vertices}
    \end{figure}
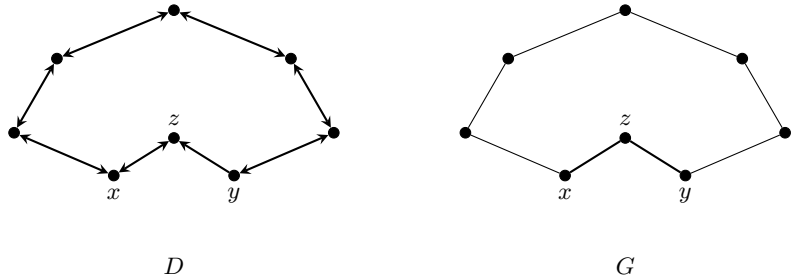
\end{proof}

Proposition \ref{prop:mut_set_in_D_and_G} shows that there isn't a simple translation between the mutual-visibility problem on undirected graph to directed graphs. This can be due to the fact that the shortest path from one vertex to another can differ depending on which vertex we start at, because of the orientation of the path. Hence, as seen in the example above, the directed path from $y$ to $z$ is different and shorter than the one from $z$ to $y$, which in this case gives more freedom for the size of the mutual-visibility set.

A natural place to start exploring the nature of $\mu(D)$ is within strongly connected components of digraphs. 

\subsection{Strongly connected components}

As per the definition of strongly connected components $\forall u,v \in S \subseteq D$, where $S$ is a strongly connected component, it holds that $u \to v$ and $v \to u$. Hence it is a trivial result that, if a digraph $D$ has a strongly connected component with size at least 2, then $\mu(D) \geq 2$. It would be interesting to observe, if the strongly connected components can also imply an upper bound on the mutual-visibility number. We define the well known condensation graph as follows:

\begin{definition}[Condensation graph]
    Let $SCC(D) = \{C_1, ..., C_k\}$ denote the set of strongly connected components of the digraph $D$. Then a condensation graph is defined as $D^{SCC}=(V^{SCC},E^{SCC})$, where
    \begin{equation}
        \begin{aligned}
        V^{SCC} =&\ SCC(D) \\ 
        E^{SCC} = \{e_{ij}\ |\  C_i,C_j \in SCC(D),&\ i \neq j,\ a \in C_i,\ b \in C_j : a \to b\}
        \end{aligned}
    \end{equation}
\end{definition}

\begin{remark}
    It is worth noting that since elements $SCC(D)$ do not intersect with each other, it is a partition of $V(D)$.
\end{remark}

Because of their structure condensation graphs are acyclic. This is implied from the fact that, if they were not, they would break the maximality of the underlying digraph's strongly connected components. We can use this fact to produce an upper bound on the mutual-visibility number of digraphs in relation to its strongly connected components.

\begin{theorem}
    \label{thm:mut_vis_in_one_scc}
    If there exists a mutual-visibility set $S$ in a digraph $D$ with $|S| \geq 2$ it must be entirely contained in one strongly connected component $C_i \in SCC(D)$. 
\end{theorem}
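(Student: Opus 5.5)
The plan is to argue directly from Definition~\ref{def:directed_mv_set}. The key observation is that membership in a mutual-visibility set forces every pair of members to be mutually reachable. We then use the fact that the strongly connected components partition $V(D)$ into equivalence classes of the relation $\leftrightarrow$.

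First, take any two distinct $x,y\in S$. By Definition~\ref{def:directed_mv_set} there exist a shortest directed $x,y$-path $P$ and a shortest directed $y,x$-path $Q$. Implicit here is that $d(x,y)<\infty$ and $d(y,x)<\infty$: if $d(x,y)=\infty$, then $\mathcal{P}_D(x,y)=\emptyset$ and no shortest path exists, so the condition cannot hold. Hence $x\to y$ and $y\to x$, that is, $x\leftrightarrow y$. The visibility condition on internal vertices plays no role here; only the existence of the two paths is used.

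Second, recall that $\leftrightarrow$ is an equivalence relation on $V(D)$. Reflexivity is trivial; symmetry holds by definition; transitivity follows by concatenating walks and extracting a path. The strongly connected components are exactly its equivalence classes, which agrees with the remark that $SCC(D)$ is a partition of $V(D)$. Now fix some $x\in S$ and let $C_i$ be the component containing $x$. Every other $y\in S$ satisfies $x\leftrightarrow y$ by the first step. By maximality of $C_i$, equivalently since $C_i$ is the class of $x$, we get $y\in C_i$. Thus $S\subseteq C_i$. Alternatively, one can phrase this via the condensation: if $x\in C_i$ and $y\in C_j$ with $i\neq j$, then $x\to y$ and $y\to x$ would give arcs $e_{ij}$ and $e_{ji}$ in $D^{SCC}$, forming a cycle and contradicting the acyclicity of $D^{SCC}$ noted above.

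There is no real obstacle. The only point needing care is making explicit that the definition's requirement of shortest paths in \emph{both} directions excludes pairs at infinite distance. The hypothesis $|S|\ge 2$ is needed only so that the statement is nonvacuous; a singleton set lies trivially in one component.
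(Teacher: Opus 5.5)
Your proof is correct and takes essentially the paper's approach. The paper also extracts mutual reachability of $u$ and $v$ from the two shortest paths (phrased there as an oriented cycle through both) and contradicts the acyclicity of the condensation graph, which is exactly your alternative phrasing; your equivalence-class argument is the same idea stated directly rather than by contradiction.
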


\begin{proof}
    Let $S\subseteq D$ be a mutual-visibility set in $D$ and let $D$ have at least two strongly connected components. Assume for a contradiction that $S$ intersects at least two distinct strongly connected components $C_1$ and $C_2$.

    Let $u\in S \cap C_1$ and $v \in S\cap C_2$. By the mutual-visibility property of $S$, an oriented cycle must exist, that contains both $u$ and $v$. This cycle implies that there exists a cycle in the condensation graph of $D$ which is impossible, hence such a cycle cannot exist, and the vertices $u$ and $v$ are not mutually-visible.
\end{proof}

\begin{corollary}
    The mutual-visibility number of a digraph $D$ is the maximum of the mutual-visibility numbers of its strongly connected components:
    \begin{equation}
        \mu(D) = \max_{C\in SCC(D)} \mu(C)
    \end{equation}
\end{corollary}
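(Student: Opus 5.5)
The plan is to prove the two inequalities $\mu(D)\ge \max_{C}\mu(C)$ and $\mu(D)\le \max_{C}\mu(C)$ separately. Here $\mu(C)$ is read as the mutual-visibility number of the induced subdigraph $D[C]$. Both directions rest on one structural fact, which I will establish first: for a strongly connected component $C$ and vertices $u,v\in C$, every directed $u,v$-path in $D$ lies entirely inside $C$.

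To prove the fact, let $w$ be an internal vertex of a $u,v$-path. Then $u\to w$ and $w\to v$. Since $v\to u$ holds inside $C$, we also get $w\to v\to u$, so $w\leftrightarrow u$. By maximality of the component, $w\in C$. Two consequences follow. First, $\mathcal{P}_D(u,v)=\mathcal{P}_{D[C]}(u,v)$. Second, $d_D(u,v)=d_{D[C]}(u,v)$, and the shortest $u,v$-paths in $D$ and in $D[C]$ are exactly the same paths.

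From this I will deduce that for $S\subseteq C$, $S$ is a mutual-visibility set in $D$ if and only if it is one in $D[C]$. The condition in Definition~\ref{def:directed_mv_set} quantifies only over shortest $x,y$- and $y,x$-paths with $x,y\in S\subseteq C$. These paths and their internal vertices are identical in the two digraphs, so the condition holds in one exactly when it holds in the other. This immediately gives $\mu(D)\ge\mu(C)$ for every $C\in SCC(D)$.

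For the reverse inequality, take a maximum mutual-visibility set $S$ of $D$. If $|S|\ge 2$, Theorem~\ref{thm:mut_vis_in_one_scc} places $S$ inside a single component $C$, and by the equivalence above $S$ is a mutual-visibility set of $D[C]$, so $\mu(D)\le\mu(C)$. If $|S|\le 1$, then $\mu(D)\le 1$, while every component is nonempty and any singleton is trivially a mutual-visibility set, so $\mu(C)\ge 1$. I also need to record that the empty digraph is excluded or treated trivially.

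The only nontrivial step is the path-confinement fact. Once it is in place, the corollary is bookkeeping on top of Theorem~\ref{thm:mut_vis_in_one_scc}. The point to be careful about there is that it is the \emph{existence} of the return path $v\to u$ that forces intermediate vertices back into $C$; without it the argument fails.
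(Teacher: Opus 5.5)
Your proof is correct and follows the same route as the paper, which simply invokes Theorem~\ref{thm:mut_vis_in_one_scc} to confine any mutual-visibility set of size at least two to a single component. Your path-confinement argument (every $u,v$-path with $u,v\in C$ stays inside $C$, so for subsets of $C$ being a mutual-visibility set in $D$ and in $D[C]$ coincide) and your separate handling of the case $|S|\le 1$ supply steps that the paper's one-line proof leaves implicit: the inequality $\mu(D)\ge\mu(C)$, and the identification of $\mu(C)$ with the restriction of the problem in $D$ to subsets of $C$.
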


\begin{proof}
By Theorem \ref{thm:mut_vis_in_one_scc} any mutual-visibility set in $D$ must be contained in one strongly connected component, hence $\mu(D)$ is the largest $\mu(C)$ for any $C$ in $SCC(D)$. 
\end{proof}

An analogue of the cut edge from undirected graphs is the strong bridge in directed variants. We define a \textit{strong bridge} as the edge whose removal increases the number of strongly connected components.

\vspace{0.1cm}

If a strongly connected component $C$ contains a strong bridge $e$, and its removal adds at least two new components $X$ and $Y$ such that $X\to Y$ but $Y\not\to X$. If there is a mutual-visibility set $S$ in $C$ such that $v\in X \cap S$ and $u\in Y \cap S$, then the edge $e$ is critical to the path $u \to v$. This is a direct application of Menger's theorem to the directed graph case.

\begin{lemma}[Strong Bridge Bottleneck]
    \label{lem:strong_bridge_bottleneck}
    Let $D=(V,A)$ be a strongly connected digraph and let $e=(u,v)$ be a strong bridge. There exists a partition of the vertex set $(V_{source}, V_{sink})$ such that $e$ is the unique edge directed from $V_{source}$ to $V_{sink}$.
    
    Consequently, for any mutual-visibility set $S$ containing $x \in V_{source}$ and $y \in V_{sink}$, every path from $x$ to $y$ must contain the edge $e$.
\end{lemma}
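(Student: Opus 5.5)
Since $e=(u,v)$ is a strong bridge of the strongly connected digraph $D$, the digraph $D-e$ is no longer strongly connected. I would take $V_{sink}$ to be the set of vertices reachable from $v$ in $D-e$, and $V_{source}=V\setminus V_{sink}$. The plan has three steps: show the two sets form a genuine partition separating $u$ from $v$, show $e$ is the only arc crossing from $V_{source}$ to $V_{sink}$, and then derive the path statement as a cut argument.

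For the first step, I would prove $u\notin V_{sink}$, that is, $u$ is not reachable from $v$ in $D-e$. Suppose it were. Any pair $a,b$ is joined in $D$ by a directed path, and every path using $e$ can have $e$ replaced by the $v\to u$ path in $D-e$... wait, that is the wrong direction. The correct replacement uses a $u\to v$ walk avoiding $e$. So instead I would argue as follows.

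Since $e$ is a strong bridge, some ordered pair $(a,b)$ has $b$ reachable from $a$ in $D$ but not in $D-e$. Take a shortest $a\to b$ path in $D$; it must use $e$. Hence $u$ is reachable from $a$ and $b$ is reachable from $v$ in $D-e$. If $v$ were reachable from $u$ in $D-e$, we could reroute and reach $b$ from $a$ without $e$, a contradiction. So $v$ is not reachable from $u$ in $D-e$.

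I therefore redefine the sets: let $V_{source}$ be the set of vertices reachable from $u$ in $D-e$, and $V_{sink}=V\setminus V_{source}$. Then $u\in V_{source}$ and $v\in V_{sink}$, so both sets are nonempty and form a partition.

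For the second step, I would note that $V_{source}$ is closed under out-arcs in $D-e$. Consequently no arc of $D-e$ leaves $V_{source}$ for $V_{sink}$. The arc $e$ itself does go from $u\in V_{source}$ to $v\in V_{sink}$, so $e$ is the unique such arc.

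For the path statement, I would use the fact that any directed path from $x\in V_{source}$ to $y\in V_{sink}$ starts in $V_{source}$ and ends outside it. It must therefore contain an arc leaving $V_{source}$, and by the second step that arc is $e$. This holds for every path, in particular for the shortest paths required by the mutual-visibility condition in Definition~\ref{def:directed_mv_set}. The hypothesis that $S$ is a mutual-visibility set is thus not really needed beyond supplying such $x,y$; strong connectivity is what guarantees the paths exist at all. I would remark that this is the Menger-type cut observation mentioned before the lemma.

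The main obstacle is choosing the partition correctly: reachability from $u$, not from $v$. One must verify through the rerouting argument that $v$ falls outside this set, which is exactly where the strong-bridge hypothesis is used.
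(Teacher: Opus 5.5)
Your proof is correct and uses the same cut argument as the paper: a partition with no $V_{source}\to V_{sink}$ arcs in $D-e$, so that $e$ is the unique forward arc in $D$ and every $x\to y$ path must cross via $e$. You make the cut explicit as the out-reachability closure of $u$ in $D-e$, with your rerouting argument placing $v$ outside it, where the paper only asserts that a cut exists; you also rightly observe that neither Menger's theorem nor the mutual-visibility hypothesis is needed. In a write-up, drop the abandoned first attempt: with $V_{sink}$ defined as the set reachable from $v$ in $D-e$, strong connectivity of $D$ forces $u\in V_{sink}$, since a $v\to u$ path can never use the arc $(u,v)$, so that version cannot work.
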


\begin{proof}
    Since $e$ is a strong bridge, its removal yields a digraph $D' = D - e$ that is not strongly connected. This implies the existence of a cut $(V_{source}, V_{sink})$ such that there are no edges from $V_{source}$ to $V_{sink}$ in $D'$.
    
    In the original graph $D$, $e$ is therefore the unique edge crossing the cut from $V_{source}$ to $V_{sink}$. By Menger's Theorem, the maximum number of edge-disjoint paths from any $x \in V_{source}$ to any $y \in V_{sink}$ is equal to the size of the minimum edge cut separating them. Since $e$ is the only edge leaving $V_{source}$, any path starting in $V_{source}$ and ending in $V_{sink}$ must traverse $e$ to cross the cut. Thus, $e$ is part of every shortest $x,y$-path.
\end{proof}

What this lemma implies is that if there is a mutual-visibility set that contains a vertex from each set, the strong bridge blocks any other path between the two partitions, limiting the size of the set itself. In fact it provides a way of optimally choosing (or avoiding) vertices of a mutual-visibility set, as we formulate in the observation below:

\begin{corollary}
    \label{cor:bridge_endpoints}
    Let $e = (u,v)$ be a strong bridge separating partitions $V_{source}$ and $V_{sink}$. If a mutual-visibility set $S$ contains vertices from both partitions (i.e., it spans the bridge), then:
    \begin{enumerate}
        \item If $u \in S$, then $u$ must be the only vertex in $S \cap V_{source}$.
        \item If $v \in S$, then $v$ must be the only vertex in $S \cap V_{sink}$.
    \end{enumerate}
\end{corollary}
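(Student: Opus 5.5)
The corollary follows from Lemma \ref{lem:strong_bridge_bottleneck}. The argument is a short contradiction, and it is symmetric in the two items, so I will prove item 1 and then transpose it.

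Assume $S$ spans the bridge, so there is some $y \in S \cap V_{sink}$. Suppose $u \in S$ and, for contradiction, there is another vertex $x \in S \cap V_{source}$ with $x \neq u$. By Lemma \ref{lem:strong_bridge_bottleneck}, every directed $x,y$-path contains the edge $e=(u,v)$. In particular, every shortest $x,y$-path passes through $u$. Because $u \neq x$ and $u$ lies on the $V_{source}$ side of the cut, $u \neq y$ as well, so $u$ is an internal vertex of that path. Then $S \cap V(P) \supseteq \{x,y,u\} \neq \{x,y\}$ for every shortest $x,y$-path $P$. This contradicts Definition \ref{def:directed_mv_set}, so $S \cap V_{source} = \{u\}$.

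For item 2, suppose $v \in S$ and some $y' \in S \cap V_{sink}$ has $y' \neq v$. Choose any $x \in S \cap V_{source}$; one exists because $S$ spans the bridge. Every $x,y'$-path uses $e$ and therefore passes through $v$. Since $v \neq y'$ and $v \notin V_{source}$, $v$ is internal to every shortest $x,y'$-path, which is again a contradiction.

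The only delicate point is checking that the bridge endpoint really is an \emph{internal} vertex. In item 1 this needs $u\neq x$, which holds by assumption, and $u\neq y$, which holds because the two vertices lie on different sides of the partition. The mirror check in item 2 uses $v\neq x$, for the same reason, and $v \neq y'$, by assumption. A smaller point is that the lemma's partition must be set up with $u \in V_{source}$ and $v \in V_{sink}$. This holds because $e$ is the unique edge crossing from $V_{source}$ to $V_{sink}$, and I will state it explicitly before the case analysis.
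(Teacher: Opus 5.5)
Your proposal is correct and follows the paper's argument. By Lemma~\ref{lem:strong_bridge_bottleneck}, every path from $V_{source}$ to $V_{sink}$ traverses $e$, so the bridge endpoint in $S$ is an internal vertex of every shortest such path, which contradicts mutual visibility; your explicit check that $u$ (resp.\ $v$) is genuinely internal, and your separate treatment of item 2, spell out what the paper covers with ``the same logic applies.''
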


\begin{proof}
    Suppose $u \in S$ and there exists another vertex $z \in S \cap V_{source}$ such that $z \neq u$. Let $y$ be any vertex in $S \cap V_{sink}$.
    
    By Lemma \ref{lem:strong_bridge_bottleneck}, every path from $z$ to $y$ must traverse the edge $e = (u, v)$. Consequently, any such path is of the form $z \rightsquigarrow u \to v \rightsquigarrow y$. Since $u$ is an internal vertex of this path and $u \in S$, the vertex $u$ blocks the visibility between $z$ and $y$. This contradicts the assumption that $S$ is a mutual-visibility set.  The same logic applies if $v \in S$ for any pair traversing from $V_{source}$ to $V_{sink} \setminus \{v\}$.
\end{proof}

Although it is tempting to impose a \textit{linear upper bound} on the maximum mutual-visibility set using the strong bridge invariant, it turns out that the number of strong bridges is not correlated to the mutual-visibility number $\mu(D)$, as we prove next.

\begin{proposition}
    Let $\beta(D)$ denote the number of strong bridges in a strongly connected digraph $D$. There is no linear correlation between $\mu(D)$ and $\beta(D)$. specifically:
    \begin{enumerate}
        \item $\beta(D)$ can be arbitrarily large while $\mu(D)$ remains constant.
        \item $\mu(D)$ can be arbitrarily large while $\beta(D)$ remains constant.
    \end{enumerate}
\end{proposition}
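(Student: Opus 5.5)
Both parts are constructive, so the plan is to exhibit one explicit family of strongly connected digraphs for each. By Theorem \ref{thm:mut_vis_in_one_scc} the whole question reduces to strongly connected digraphs, so nothing beyond strong connectivity needs to be tracked.

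\textbf{Part 1.} I would take the directed cycle $\vec C_n$ on vertices $v_0,\dots,v_{n-1}$ with arcs $(v_i,v_{i+1 \bmod n})$. Every arc is a strong bridge: deleting any arc leaves a directed path, which is not strongly connected. Hence $\beta(\vec C_n)=n$, which is unbounded.

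Next I would show $\mu(\vec C_n)=2$ directly. Any two vertices are mutually visible, because the two arcs of the cycle between them supply the unique $x,y$-path and the unique $y,x$-path, and the only $S$-vertices on these paths are $x$ and $y$ themselves. So $\mu\ge 2$.

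For the upper bound, suppose $S$ has three vertices, listed as $a,b,c$ in cyclic order. The unique $a,c$-path passes through $b$ as an internal vertex, so $S$ fails to be a mutual-visibility set. Hence $\mu(\vec C_n)=2$ for every $n\ge 3$, a constant, while $\beta(\vec C_n)$ grows without bound. This settles part 1.

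\textbf{Part 2.} I would use the complete directed graph $K_n$ from the preliminaries, with both arcs $(v_i,v_j)$ and $(v_j,v_i)$ present. Since all distances are $1$, every shortest path is a single arc and has no internal vertices. Therefore $V(K_n)$ is itself a mutual-visibility set and $\mu(K_n)=n$.

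For $n\ge 3$, deleting any single arc $(v_i,v_j)$ keeps the digraph strongly connected, since the path $v_i\to v_k\to v_j$ through some third vertex $v_k$ remains. So $\beta(K_n)=0$ is constant while $\mu(K_n)=n$ is unbounded, which settles part 2.

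\textbf{Conclusion and main obstacle.} Together the two families rule out any linear relation $\mu\le a\beta+b$ or $\beta\le a\mu+b$ with constants $a,b$. The only step needing care is the upper bound $\mu(\vec C_n)\le 2$. There one must use that shortest paths in $\vec C_n$ are unique, so a blocking vertex cannot be bypassed. This follows because the only $x,y$-path in $\vec C_n$ is the forward arc of the cycle.
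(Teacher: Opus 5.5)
Your proof is correct. Part 1 is essentially the paper's argument. The paper also uses the directed cycle; its $n\ge 6$ is an inessential choice. It notes that every arc is a strong bridge and cites $\mu(C_n)=2$ from its later proposition on directed cycles, whose proof is exactly your cyclic-order blocking argument.

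Part 2 is where you diverge. The paper joins two complete digraphs $K_n$ and $K'_n$ by the arcs $(u,v),(v,u)$, so that $\beta(D)=2$. It then selects large sets from both cliques while avoiding $u$ and $v$, as in Corollary~\ref{cor:bridge_endpoints}. This tacitly requires checking that the unique shortest path $x\to u\to v\to y$ between the cliques has only $u,v$ as internal vertices. You instead take a single $K_n$ with $n\ge 3$, where $\beta(K_n)=0$ and $\mu(K_n)=n$. Your witness is shorter and needs no analysis of paths through a bridge. It is also logically sufficient: $\mu$ being unbounded at the fixed value $\beta=0$ already rules out any bound $\mu\le a\beta+b$.

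What the paper's example buys is a witness in which strong bridges are actually present. That is the point of the surrounding discussion of Lemma~\ref{lem:strong_bridge_bottleneck}: bridges constrain where visible vertices may sit, but they do not cap $\mu$. Your example adapts easily to make the same point. Attaching a directed cycle of length $m+1$ through one vertex of $K_n$ gives $\beta=m+1$, while $V(K_n)$ remains a mutual-visibility set.

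Your opening appeal to Theorem~\ref{thm:mut_vis_in_one_scc} is harmless but unnecessary, since the statement already assumes $D$ is strongly connected.
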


\begin{proof}
    To prove the first case, consider the directed cycle $C_n$ with $n \geq 6$ vertices. Removing any edge in $C_n$ destroys strong connectivity, but increases the amount of (trivial) strongly connected components, so every edge is a strong bridge. Thus, $\beta(C_n) = n$. However, the mutual-visibility number of a directed cycle is constant, $\mu(C_n) = 2$. As $n \to \infty$, the number of bridges grows indefinitely while $\mu(D)$ does not.

    To prove the second case, consider a digraph $D$ constructed from two large complete digraphs (cliques) $K_n$ and $K'_n$, connected by two edges $e_1 = (u, v)$ and $e_2 = (v, u)$ where $u \in K_n$ and $v \in K'_n$. The edges $e_1$ and $e_2$ are the only edges connecting the two cliques; thus, removing either destroys the strong connectivity. Hence, $\beta(D) = 2$.
    
    However, within the cliques $K_n$ and $K'_n$, any subset of vertices forms a mutual-visibility set (as shortest paths are direct edges). We can select large subsets from $K_n$ and $K'_n$ (excluding the bridge endpoints $u,v$ per Corollary \ref{cor:bridge_endpoints}) to form a mutual-visibility set for $D$. Thus, as $n \to \infty$, $\mu(D)$ grows indefinitely while $\beta(D)$ remains fixed at 2.
    
    Since neither value bounds the other, they are not linearly correlated.
\end{proof}

We proved statements for the case where the digraph $D$ is one strongly connected component. In the general case, edges connecting distinct strongly connected components are not strong bridges, as their removal does not decompose any component. Furthermore, since Theorem \ref{thm:mut_vis_in_one_scc} restricts any mutual-visibility set to a single component, the 'bottleneck' analysis of Lemma \ref{lem:strong_bridge_bottleneck} need only be applied locally within each component. Thus, the problem of finding $\mu(D)$ in a general graph reduces to finding the maximum $\mu(C)$ among all components $C$, where each $\mu(C)$ is internally constrained by its own strong bridges.

\section{DAGs and Directed cycles}

In this section, we analyze the mutual-visibility number for directed acyclic and directed cyclic graph. The structural constraints of these classes allow us to determine exact values for $\mu(D)$.

\subsection{Directed Acyclic Graphs (DAGs)}

The definition of a mutual-visibility set requires that for any distinct pair $x, y \in S$, there must exist both a shortest path from $x$ to $y$ and a shortest path from $y$ to $x$. This bidirectional reachability condition leads to a trivial result for acyclic graphs.

\begin{proposition}
    For any directed acyclic graph $D$, the mutual-visibility number is $\mu(D) = 1$.
\end{proposition}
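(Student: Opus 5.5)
The proof has two halves: the lower bound $\mu(D)\ge 1$ and the upper bound $\mu(D)\le 1$. I would also tacitly assume that $D$ has at least one vertex.

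For the lower bound, I take any single vertex $v\in V(D)$ and set $S=\{v\}$. The condition in Definition~\ref{def:directed_mv_set} quantifies over distinct pairs $x,y\in S$. There are no such pairs, so the condition holds vacuously and $S$ is a mutual-visibility set. Hence $\mu(D)\ge 1$.

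For the upper bound, I show that no mutual-visibility set $S$ with $|S|\ge 2$ exists. The cleanest route is through the strongly connected components. In a DAG every strongly connected component is a single vertex: if $C$ contained two distinct vertices $u\ne v$, then $u\to v$ and $v\to u$. Concatenating a $u,v$-path with a $v,u$-path gives a closed directed walk through $u$ of positive length, and this walk contains a directed cycle, contradicting acyclicity. By Theorem~\ref{thm:mut_vis_in_one_scc}, any mutual-visibility set with $|S|\ge 2$ must lie entirely inside one strongly connected component. Every component has size $1$, so this is impossible.

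As a self-contained alternative to invoking Theorem~\ref{thm:mut_vis_in_one_scc}, I can argue directly. Take distinct $x,y\in S$. The definition demands a shortest $x,y$-path $P$ and a shortest $y,x$-path $Q$, so in particular $d(x,y)<\infty$ and $d(y,x)<\infty$. By the walk-to-cycle argument above, this already contradicts acyclicity.

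The only step requiring any care is the walk-to-cycle extraction, since $P$ and $Q$ may share internal vertices, so their concatenation need not itself be a simple cycle. I would handle this in the standard way. Choose the first vertex on $Q$, after its start, that also lies on $P$. Splice the segment of $P$ from that vertex to $y$ with the initial segment of $Q$ from $y$ to that vertex. The result is a simple directed cycle. Alternatively, I would use the fact that any closed walk of positive length contains a directed cycle.

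Combining the two halves gives $\mu(D)=1$.
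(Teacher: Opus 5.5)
Your proof is correct and follows the paper's approach. The lower bound is a singleton satisfying the condition vacuously, and the upper bound is the fact that two distinct vertices of a DAG cannot be mutually reachable; that is exactly your direct argument, and your route through Theorem~\ref{thm:mut_vis_in_one_scc} with singleton components repackages the same idea. Your explicit splicing of $P$ and $Q$ into a simple directed cycle also fills in a step the paper only asserts when it says that $u\to v$ and $v\to u$ ``would form a cycle.''
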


\begin{proof}
    By definition, a DAG contains no directed cycles. Consequently, for any distinct pair of vertices $u, v \in V(D)$, it is impossible to have both $u \rightarrow v$ and $v \rightarrow u$ (as this would form a cycle).
    
    According to Definition \ref{def:directed_mv_set}, a set $S$ with $|S| \geq 2$ requires at least one pair of distinct vertices to be mutually reachable. Since no such pair exists in a DAG, no set of size 2 or greater can satisfy the mutual-visibility property. Thus, the maximum size is limited to a single vertex (which vacuously satisfies the condition), implying $\mu(D) = 1$.
\end{proof}

\subsection{Directed Cycles}

Directed cycles ($C_n$) represent the simplest form of strong connectivity. Despite being strongly connected, their strict geometric structure severely limits mutual visibility.

\begin{proposition}
    For a directed cycle $C_n$ with $n \geq 3$, the mutual-visibility number is $\mu(C_n) = 2$.
\end{proposition}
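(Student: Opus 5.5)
We prove two bounds: $\mu(C_n)\ge 2$ by exhibiting a set, and $\mu(C_n)\le 2$ by showing no $3$-set works. Throughout, let $C_n=(v_0,v_1,\dots,v_{n-1})$ with arcs $(v_i,v_{i+1})$ and indices taken modulo $n$. The only structural fact needed is that between any two distinct vertices there is exactly one directed path in each direction. Hence the unique $v_i,v_j$-path is the shortest one: it is $v_i,v_{i+1},\dots,v_j$, of length $(j-i)\bmod n$. The definition of a mutual-visibility set then leaves no choice of path, so visibility reduces to a pure combinatorial condition on arcs of the cycle.

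For the lower bound, take $S=\{v_0,v_1\}$. The $v_0,v_1$-path is the single arc $(v_0,v_1)$, which has no internal vertices. The $v_1,v_0$-path is $v_1,v_2,\dots,v_{n-1},v_0$, whose internal vertices $v_2,\dots,v_{n-1}$ all lie outside $S$. So $S$ is a mutual-visibility set and $\mu(C_n)\ge 2$. In fact any pair works, which matches the remark after the definition of strongly connected components: a strongly connected component of size at least $2$ already gives $\mu\ge 2$.

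For the upper bound, suppose $S$ is a mutual-visibility set with $|S|\ge 3$. Pick three distinct vertices of $S$ and list them in cyclic order as $x,y,z$, so that walking forward along the cycle from $x$ meets $y$ before $z$. The unique $x,z$-path is the forward walk $x\to\dots\to y\to\dots\to z$, and it contains $y$ as an internal vertex. Since $y\in S$ and no alternative shortest $x,z$-path exists, $x$ and $z$ are not $S$-visible. This contradicts the assumption, so every mutual-visibility set has at most two vertices.

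The main obstacle is stating carefully that the path in each direction is unique, since the argument relies on having no freedom to reroute around $y$. This holds because a directed path in $C_n$ must follow successor arcs and cannot repeat vertices, so it is determined by its endpoints. Once uniqueness is established, the cyclic-order argument is immediate, and the hypothesis $n\ge 3$ is exactly what allows a third vertex to exist. Combining the two bounds gives $\mu(C_n)=2$.
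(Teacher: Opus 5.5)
Your proposal is correct and follows essentially the same route as the paper: a two-vertex set gives the lower bound, and for the upper bound three vertices of $S$ in cyclic order have the middle one blocking the unique $x,z$-path. Your explicit justification that each directed path in $C_n$ is unique (every vertex has out-degree $1$) spells out a point the paper's proof uses only implicitly.
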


\begin{proof}
    Let the vertices of $C_n$ be denoted $\{v_0, v_1, \dots, v_{n-1}\}$ with edges labeled as $(v_i, v_{i+1 \pmod n})$.
    
    \textit{Lower Bound ($\mu(C_n) \geq 2$):} 
    Consider any set of two distinct vertices $S = \{u, v\}$. The shortest path from $u$ to $v$ follows the cycle's orientation and contains only vertices from $V(C_n) \setminus \{u, v\}$ as internal vertices. Since $S$ contains no other vertices, these internal vertices are not in $S$. The same holds for the return path from $v$ to $u$. Thus, any pair is mutually visible.

    \textit{Upper Bound ($\mu(C_n) < 3$):} 
    Assume for the sake of contradiction that there exists a mutual-visibility set $S$ with $|S| \geq 3$. Let $x, y, z$ be three distinct vertices in $S$. Based on the cyclic ordering of $C_n$, one vertex must lie on the unique path between the other two. Without loss of generality, assume $y$ lies on the path from $x$ to $z$.
    
    The unique path from $x$ to $z$ is $P_{xz} = (x, \dots, y, \dots, z)$. Since $y \in S$ and $y$ is an internal vertex of $P_{xz}$, the visibility between $x$ and $z$ is blocked. This contradicts the definition of a mutual-visibility set.
    
    Therefore, no set of size 3 exists, and $\mu(C_n) = 2$.
\end{proof}

\section{Tournaments}

We now consider tournaments, which are directed graphs where every pair of distinct vertices is connected by exactly one directed edge. 

\begin{definition}[Tournament]
A tournament $T = (V, A)$ is a directed graph such that for every pair of distinct vertices $u, v \in V$, exactly one of $(u, v)$ or $(v, u)$ is in $A$.
\end{definition}

While directed cycles are strictly bounded by $\mu(C_n) = 2$, we show that strongly connected tournaments can support arbitrarily large mutual-visibility sets. This is due to the high density of edges, which facilitates the existence of "return paths" of length 2.

Historically, the properties of paths in tournaments have been studied via probabilistic methods. It is a standard result in the theory of random tournaments, discussed by Moon \cite{MOON1968}, that for sufficiently large $n$, every pair of vertices has approximately $n/4$ common outgoing neighbors. To provide a concrete lower bound for $\mu(T)$, we utilize \textit{Paley tournaments}, a class of deterministic tournaments that exhibit these "random-like" properties (often called quasirandom or doubly regular).

\begin{definition}[Paley Tournament]
Let $q$ be a prime power such that $q \equiv 3 \pmod 4$. The Paley tournament $P_q$ is a directed graph with vertex set $V = \mathbb{F}_q$, where there is an arc $(u, v)$ if and only if $v - u$ is a non-zero quadratic residue in $\mathbb{F}_q$.
\end{definition}

Intuitively, tournaments represent the structural opposite of DAGs regarding visibility. While DAGs enforce a strict ordering that prevents any return paths (yielding $\mu(D)=1$), tournaments are maximally connected in the sense that every pair has a direct relationship. The challenge for mutual visibility here lies in finding a "triangulated" return path of length 2. Paley tournaments, constructed from finite fields, possess quasirandom properties that ensure a dense distribution of these specific 2-step paths, effectively preventing any small set of vertices from blocking all return routes.

\begin{theorem}
For any integer $k \geq 1$, there exists a tournament $T$ such that $\mu(T) \geq k$. Specifically, if $q$ is a prime power with $q \equiv 3 \pmod 4$ and $q > 4k - 5$, then $\mu(P_q) \geq k$.
\end{theorem}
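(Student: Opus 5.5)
The plan is to show that, once $q>4k-5$, \emph{every} $k$-subset $S\subseteq \mathbb{F}_q$ is a mutual-visibility set in $P_q$. Note that $4k-5\ge k$ for $k\ge 2$, so $P_q$ has at least $k$ vertices; the case $k=1$ is trivial. Existence for every $k$ then follows because there are infinitely many primes $q\equiv 3 \pmod 4$, by the elementary Euclid-type argument or by Dirichlet's theorem.

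First I would record the standard structural facts about $P_q$. Since $-1$ is a non-residue when $q\equiv 3\pmod 4$, exactly one of $v-u$ and $u-v$ is a nonzero square, so $P_q$ is a tournament. Each vertex has out-degree $(q-1)/2$. Moreover $P_q$ is doubly regular: any two distinct vertices have exactly $(q-3)/4$ common out-neighbours. I would justify this with the usual character-sum (Jacobsthal) computation $\sum_{x}\chi(x)\chi(x+a)=-1$ for $a\neq 0$. Alternatively, by the transitivity of the automorphisms $x\mapsto ax+b$ with $a$ a nonzero square on arcs, the number is the same for every pair, and a double count over all pairs gives $(q-3)/4$.

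Next, I would fix distinct $x,y\in S$, and by symmetry assume $(x,y)\in A$. Then $d(x,y)=1$ and the arc itself is a shortest $x,y$-path with no internal vertex, so the condition in Definition~\ref{def:directed_mv_set} is automatic in that direction. In the other direction $(y,x)\notin A$, so $d(y,x)\ge 2$. Consider the vertices $w$ with $y\to w\to x$. Such $w$ are exactly the out-neighbours of $y$ that are not out-neighbours of $x$, since $w\neq x$ because $x\notin N^+(y)$, and in a tournament $w\notin N^+(x)$ forces $(w,x)\in A$. Their number is $\frac{q-1}{2}-\frac{q-3}{4}=\frac{q+1}{4}$. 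Hence $d(y,x)=2$ and there are $(q+1)/4$ internally disjoint shortest $y,x$-paths, each with a single internal vertex.

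The final step is counting. At most $|S|-2=k-2$ of these middle vertices lie in $S$. Thus if $(q+1)/4>k-2$, i.e.\ $q>4k-9$, some shortest $y,x$-path avoids $S$ internally, and the pair $x,y$ is mutually visible. The hypothesis $q>4k-5$ is stronger than this, so $S$ is a mutual-visibility set and $\mu(P_q)\ge k$. The only step with real content is the double-regularity count $(q-3)/4$. I would cite it as the standard property of Paley tournaments, but include the short quadratic-character computation for completeness, since everything else is bookkeeping.
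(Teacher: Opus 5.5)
Your proof is correct and follows the same route as the paper: take an arbitrary $k$-subset $S$, use the arc for one direction, and for the other direction count the length-$2$ return paths $y\to w\to x$ against the at most $k-2$ potential blockers in $S\setminus\{x,y\}$. The one substantive difference is in your favour: the paper equates the number of such middle vertices with the double-regularity constant $(q-3)/4$, which actually counts common out-neighbours, whereas your computation $\frac{q-1}{2}-\frac{q-3}{4}=\frac{q+1}{4}$ is the correct count when $(x,y)\in A$ (e.g.\ $P_3$ has one such path, not zero), so you obtain the sharper sufficient condition $q>4k-9$, and the paper's hypothesis $q>4k-5$ remains sufficient only because its claimed count underestimates the true one.
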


\begin{proof}
Let $T = P_q$ satisfy the condition $q > 4k - 5$. Let $S \subseteq V(T)$ be any set of $k$ vertices. We show that $S$ is a mutual-visibility set.

Consider any distinct pair $u, v \in S$. By the definition of a tournament, exactly one arc exists between them. Without loss of generality, assume $(u, v) \in A$. This arc represents a shortest path of length 1. Since paths of length 1 contain no internal vertices, the visibility condition from $u$ to $v$ is trivially satisfied ($S \cap \emptyset = \emptyset$).

For $u$ and $v$ to be mutually visible, there must also be a shortest path from $v$ to $u$ whose internal vertices avoid $S$. Since $(u, v) \in A$, the shortest possible path from $v$ to $u$ must have length at least 2. Such a path exists if there is a vertex $w$ such that $(v, w) \in A$ and $(w, u) \in A$.

A fundamental property of Paley tournaments $P_q$ is that they are \textit{doubly regular}: for any pair of distinct vertices $u, v$, the number of common neighbors $N(v, u) = \{w \in V : v \to w \text{ and } w \to u\}$ is constant:
\[ |N(v, u)| = \frac{q-3}{4} \]
To ensure that $u$ and $v$ are mutually visible, we require at least one vertex in $N(v, u)$ to be outside the set $S$. The set $S$ contains $k$ vertices, two of which are $u$ and $v$. Thus, the number of "blocking" candidates in $S$ is $|S \setminus \{u, v\}| = k - 2$.

A valid internal vertex $w \notin S$ exists if:
\[ |N(v, u)| > k - 2 \]
Substituting the Paley property:
\[ \frac{q-3}{4} > k - 2 \]
\[ q - 3 > 4k - 8 \]
\[ q > 4k - 5 \]
Since we chose $q$ to satisfy this inequality, there always exists a path $v \to w \to u$ with $w \notin S$. Thus, every pair in $S$ is mutually visible, and $\mu(P_q) \geq k$.
\end{proof}

\noindent
This result shows that for any $k\geq 1$, there exists such a tournament, in particular a Paley tournament $P_q$, such that $\mu({P_q}) \geq k$. 

\section{Algorithmic Complexity}

We continue by addressing the computational complexity of the mutual-visibility problem in directed graphs. Following the foundational work of Di Stefano \cite{DISTEFANO2022126850}, we analyze both the complexity of verifying a candidate set and the hardness of determining the mutual-visibility number $\mu(D)$.

\subsection{Verification Algorithm}
The \textit{Mutual-Visibility Verification Problem} asks whether a given subset $S \subseteq V(D)$ is a mutual-visibility set. Unlike the optimization problem, verification can be performed efficiently in polynomial time.

\begin{proposition}
    Given a directed graph $D=(V, A)$ and a subset $S \subseteq V$, the validity of $S$ as a mutual-visibility set can be verified in $O(|S| \cdot (|A| + |V| \log |V|))$ time using Dijkstra's algorithm, or $O(|S| \cdot (|V| + |A|))$ for unweighted graphs.
\end{proposition}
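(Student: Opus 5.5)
The plan is to reduce verification to $|S|$ single-source computations, one per vertex $s \in S$. The key observation is that a shortest $s,t$-path whose internal vertices avoid $S$ exists if and only if $d_{D'_s}(s,t) = d_D(s,t)$. Here $D'_s$ denotes the digraph in which every vertex of $S \setminus \{s\}$ is allowed only as a path endpoint: we keep all arcs entering these vertices and delete all arcs leaving them. Equivalently, we run a search from $s$ in which vertices of $S\setminus\{s\}$ are never expanded.

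\textbf{Step 1: compute the true distances.} For each $s\in S$, run one BFS (or Dijkstra, in the weighted case) from $s$ in $D$ to obtain $d_D(s,t)$ for all $t$.

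\textbf{Step 2: compute the restricted distances.} Run a second search from $s$ in which a vertex $t \in S\setminus\{s\}$ receives a distance label when it is reached but is never expanded. This yields $d'_s(t)$, the length of a shortest $s,t$-path whose internal vertices all lie in $\overline S$.

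\textbf{Step 3: check the condition.} Accept if and only if $d'_s(t) = d_D(s,t) < \infty$ for all $s \ne t \in S$. Checking both ordered pairs $(s,t)$ and $(t,s)$ gives exactly the two paths $P$ and $Q$ required by Definition \ref{def:directed_mv_set}. The case $d = \infty$ correctly rejects pairs that are not mutually reachable, which is consistent with Theorem \ref{thm:mut_vis_in_one_scc}.

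\textbf{Correctness.} Any path found by the restricted search is a path in $D$, so $d'_s(t) \ge d_D(s,t)$. Equality holds precisely when some shortest path in $D$ has all of its internal vertices outside $S$: such a path survives in the restricted search, and conversely any path that search finds has no internal $S$-vertex. This step needs a short argument. The restricted search computes shortest distances in the subdigraph obtained by deleting the out-arcs of $S\setminus\{s\}$, and the standard BFS/Dijkstra correctness applies to that subdigraph. Since every internal vertex of a path must have an outgoing arc on the path, paths in this subdigraph are exactly the paths with no internal vertex in $S\setminus\{s\}$. Moreover, no simple path from $s$ has $s$ itself as an internal vertex, so $s$ needs no special handling.

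\textbf{Complexity.} Each search costs $O(|V|+|A|)$ with BFS, or $O(|A|+|V|\log|V|)$ with Dijkstra using a Fibonacci heap. The comparison in Step 3 costs $O(|S|)$ per source. Summing over the $|S|$ sources gives the stated bounds.

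I expect the only subtle point to be the equivalence in the correctness step, namely that "exists a shortest path avoiding $S$ internally" is equivalent to "the restricted distance equals the unrestricted distance." The rest is bookkeeping.
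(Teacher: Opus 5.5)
Your proposal is correct and follows essentially the same route as the paper: $|S|$ unrestricted BFS/Dijkstra runs to get $d_D$, then $|S|$ restricted runs in which vertices of $S\setminus\{s\}$ receive labels but are never expanded (exactly the search in Algorithm~\ref{alg:verification}), then a comparison of the two distances for both ordered pairs. Your formulation is in fact a bit more careful than the paper's prose: deleting only the out-arcs of $S\setminus\{s\}$ keeps the target vertices reachable (the paper's $D[V\setminus(S\setminus\{u\})]$ would delete them), and insisting on $d_D(s,t)<\infty$ correctly rejects pairs that are not mutually reachable, which the paper's bare equality test would accept since $\infty=\infty$.
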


\begin{proof}
    To verify $S$, we must check that for every distinct ordered pair $(u, v) \in S \times S$, there exists a shortest $u,v$-path in $D$ that contains no vertices from $S \setminus \{u, v\}$.
    
    The algorithm proceeds as follows:
    \begin{enumerate}
        \item \textbf{Global Distances:} First, compute the shortest path distances between all pairs in $S$ within the original graph $D$. Let $d_D(u, v)$ denote this distance. This can be done by running $|S|$ Single-Source Shortest Path (SSSP) computations (e.g., BFS or Dijkstra) starting from each $s \in S$.
        \item \textbf{Restricted Distances:} For each source $u \in S$, compute the shortest path distances to all other $v \in S$ in the restricted graph $D' = D[V \setminus (S \setminus \{u\})]$. Note that we remove all vertices of $S$ except the source $u$. Let $d_{restricted}(u, v)$ be the distance in this subgraph.
        \item \textbf{Comparison:} For each pair $(u, v)$, $u$ and $v$ are mutually visible if and only if $d_{restricted}(u, v) = d_D(u, v)$ and $d_{restricted}(v, u) = d_D(v, u)$. 
    \end{enumerate}
    
    If $d_{restricted}(u, v) > d_D(u, v)$, it implies that all original shortest paths passed through some vertex in $S \setminus \{u, v\}$, thus blocking visibility. The cost is dominated by $|S|$ executions of BFS/Dijkstra, yielding the stated time complexity.
\end{proof}

\noindent
A formal pseudo code of the algorithm is given below in Algorithm \ref{alg:verification}.

\begin{algorithm}
\caption{Mutual-Visibility Set Verification}
\label{alg:verification}
\begin{algorithmic}[1]
\Require Digraph $D=(V, A)$, Subset $S \subseteq V$
\Ensure \textbf{True} if $S$ is a mutual-visibility set, \textbf{False} otherwise

\State \Comment{\textbf{Step 1: Compute Global Distances}}
\For{each vertex $u \in S$}
    \State Run BFS starting from $u$ in $D$ to compute shortest path distances $d_D(u, v)$ for all $v \in S$.
\EndFor

\State \Comment{\textbf{Step 2: Compute Restricted Distances}}
\For{each vertex $u \in S$}
    \State Initialize restricted distances $d_{restr}[v] \gets \infty$ for all $v \in V$
    \State $d_{restr}[u] \gets 0$
    \State Initialize Queue $Q$, $Q.enqueue(u)$
    
    \While{$Q$ is not empty}
        \State $curr \gets Q.dequeue()$
        
        \State \Comment{If we hit another node in S, we record distance but do not pass through}
        \If{$curr \in S$ \textbf{and} $curr \neq u$}
            \State \textbf{continue} 
        \EndIf
        
        \For{each outgoing neighbor $w$ of $curr$}
            \If{$d_{restr}[w] = \infty$}
                \State $d_{restr}[w] \gets d_{restr}[curr] + 1$
                \State $Q.enqueue(w)$
            \EndIf
        \EndFor
    \EndWhile
    \State Store $d'_{u}(v) \gets d_{restr}[v]$ for all $v \in S$
\EndFor

\State \Comment{\textbf{Step 3: Verification}}
\For{each pair of distinct vertices $u, v \in S$}
    \If{$d'_{u}(v) \neq d_D(u, v)$ \textbf{or} $d'_{v}(u) \neq d_D(v, u)$}
        \State \Return \textbf{False} \Comment{A shortest path is blocked by S}
    \EndIf
\EndFor

\State \Return \textbf{True}
\end{algorithmic}
\end{algorithm}

\subsection{Hardness of Finding $\mu(D)$}
While verification is efficient, finding the maximum size of such a set is computationally intractable. Di Stefano proved that the mutual-visibility problem is NP-complete for general undirected graphs \cite{DISTEFANO2022126850}. We extend this result to directed graphs via a simple reduction.

\begin{theorem}
    The problem of computing the mutual-visibility number $\mu(D)$ for a general digraph $D$ is NP-hard.
\end{theorem}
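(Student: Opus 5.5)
The plan is a polynomial reduction from the undirected mutual-visibility problem, which Di Stefano \cite{DISTEFANO2022126850} proved NP-complete. Given an undirected graph $G=(V,E)$ and an integer $k$, we build the \emph{symmetric digraph} $D(G)=(V,A)$ by replacing every edge $\{u,v\}\in E$ with the two arcs $(u,v)$ and $(v,u)$. This clearly takes time $O(|V|+|E|)$. We then aim to show that $\mu(D(G))=\mu(G)$, so that any algorithm computing $\mu$ on digraphs also computes $\mu$ on undirected graphs.

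The key step is a path correspondence. A sequence $(x_0,\dots,x_k)$ is a directed path in $D(G)$ if and only if it is a path in $G$, since each edge of $G$ yields arcs in both directions and $D(G)$ has no other arcs. Hence $d_{D(G)}(u,v)=d_G(u,v)=d_{D(G)}(v,u)$ for all $u,v$, and the shortest directed $u,v$-paths are exactly the shortest undirected $u,v$-paths read in order. Reversing such a path gives a shortest $v,u$-path with the same internal vertices.

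With this in hand we compare the two definitions.

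\textbf{($\Leftarrow$)} Suppose $S$ is a mutual-visibility set in $G$ and $x,y\in S$. There is a shortest $x,y$-path $P$ in $G$ with no internal vertex in $S$. Take $P$ as the directed $x,y$-path and its reversal as the directed $y,x$-path $Q$. Then $(S\cap V(P))\cup(S\cap V(Q))=\{x,y\}$, so $S$ satisfies Definition~\ref{def:directed_mv_set} in $D(G)$.

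\textbf{($\Rightarrow$)} Suppose $S$ satisfies the directed definition. The directed shortest $x,y$-path $P$ is itself a shortest undirected path avoiding $S$ internally, so $S$ is a mutual-visibility set in $G$.

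Therefore $\mu(D(G))=\mu(G)$, and deciding whether $\mu(D)\ge k$ is NP-hard. Membership in NP of the decision version also follows from the verification proposition above, giving NP-completeness of the decision problem, although only hardness is claimed.

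The main obstacle I expect is the bookkeeping around the paper's own conventions rather than any deep difficulty. Definition~\ref{def:directed_mv_set} requires both directions simultaneously, so we must confirm that the symmetric construction makes the second direction free. We must also check that disconnected $G$ is handled the same way on both sides: pairs at infinite distance are non-visible in both settings, or else we restrict to connected $G$, for which Di Stefano's hardness already holds. Finally, the paper's definition of digraph allows both $(u,v)$ and $(v,u)$, as in its use of $K_n$, so $D(G)$ is a legitimate simple digraph.
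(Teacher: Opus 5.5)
Your proposal is correct and follows the same route as the paper: you reduce from Di Stefano's undirected problem by replacing each edge with two opposite arcs, and the path correspondence gives $\mu(D(G))=\mu(G)$. You also do slightly more than the paper, which only shows that a mutual-visibility set of $G$ remains one in $D$ before asserting equality. You check the converse direction explicitly, which is needed for the reduction to preserve no-instances, and you handle disconnected inputs.
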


\begin{proof}
    To prove hardness, we employ a reduction from the undirected version of the Mutual-Visibility problem, established as NP-complete by Di Stefano \cite{DISTEFANO2022126850}. 
    
    Let $G=(V, E)$ be an instance of the undirected problem. We construct a directed instance $D=(V, A)$ by replacing every undirected edge $\{u, v\} \in E$ with two symmetric directed edges $(u, v)$ and $(v, u)$.
    
    In this symmetric digraph $D$, a path $u \to v$ exists if and only if a path exists in $G$, and their lengths are identical. Furthermore, if a set $S$ is a mutual-visibility set in $G$, shortest paths between any $u, v \in S$ avoid $S \setminus \{u, v\}$. The corresponding paths in $D$ use the same sequence of vertices and thus satisfy the directed mutual-visibility condition for both $u \to v$ and $v \to u$.
    
    Consequently, $\mu(D) = \mu(G)$. Since determining $\mu(G)$ is NP-complete, determining $\mu(D)$ must also be NP-hard.
\end{proof}

\section{Conclusion and Future Work}

In this paper, we formalized the mutual-visibility problem for directed graphs, adapting the concept from the undirected literature. While we introduced definitions for several variants—including \textbf{total}, \textbf{outer}, and \textbf{dual} mutual visibility—our analysis in this work focused primarily on the fundamental \textit{mutual-visibility number} $\mu(D)$. 

For this standard invariant, we established bounds and constructions of the bounds across fundamental graph classes; DAGs, directed cycles, and tournaments.

We further proved that finding the maximum mutual-visibility set is NP-hard for general digraphs, though verification remains polynomial, which is in fact equivalent to the undirected mutual-visibility case.

\vspace{5mm}

As this work represents the first step into directed mutual visibility, several promising avenues for future research remain:

\paragraph{Analysis of Defined Variants}
In Section \ref{sec:mut_visibilities}, we formalized the concepts of \textbf{total}, \textbf{outer}, and \textbf{dual} mutual-visibility sets for digraphs. These variants impose stricter conditions, such as requiring visibility between vertices inside and outside the set, or among vertices entirely outside the set. While we established their definitions to provide a complete framework, we left their specific properties, bounds, and complexity landscapes unexplored. A natural next step is to extend our analysis of cycles and tournaments to these stricter variants to see if similar hierarchies exist.

\paragraph{Semi-Directed Mutual Visibility}
Our current definition requires a valid shortest path in \textit{both} directions ($u \to v$ and $v \to u$). A significant relaxation would be to require visibility in only \textit{one} direction (e.g., $u \to v$) while simply requiring reachability in the other. This "semi-directed" variant may be more applicable to asynchronous communication networks where data flow does not need to be simultaneous.

\paragraph{The Condensation Graph Hierarchy}
While we showed that a mutual-visibility set must reside within a single Strongly Connected Component (SCC), the interplay between the "internal" visibility of an SCC and its position in the condensation graph warrants further study. Specifically, determining if the mutual-visibility properties of a "parent" SCC influence the effective visibility of a "child" SCC in less restrictive variants of the problem remains an open question.

\bibliographystyle{elsarticle-num} 
\bibliography{references}

@article{DISTEFANO2022126850,
title = {Mutual visibility in graphs},
journal = {Applied Mathematics and Computation},
volume = {419},
pages = {126850},
year = {2022},
issn = {0096-3003},
doi = {https://doi.org/10.1016/j.amc.2021.126850},
url = {https://www.sciencedirect.com/science/article/pii/S0096300321009334},
author = {Gabriele {Di Stefano}}
}

@article{CICERONE2023114096,
title = {Variety of mutual-visibility problems in graphs},
journal = {Theoretical Computer Science},
volume = {974},
pages = {114096},
year = {2023},
issn = {0304-3975},
doi = {https://doi.org/10.1016/j.tcs.2023.114096},
url = {https://www.sciencedirect.com/science/article/pii/S0304397523004097},
author = {Serafino Cicerone and Gabriele {Di Stefano} and Lara Drožđek and Jaka Hedžet and Sandi Klavžar and Ismael G. Yero}
}

@book{MOON1968,
  title={Topics on Tournaments},
  author={Moon, John W.},
  year={1968},
  publisher={Holt, Rinehart and Winston},
  address={New York},
  note={See Chapter 1 for cycle properties and Chapter 8 for properties of regular tournaments.}
}

@misc{roy2025vertexvisibilitynumbergraphs,
      title={The vertex visibility number of graphs}, 
      author={Dhanya Roy and Gabriele Di Stefano and Sandi Klavžar and Aparna Lakshmanan S},
      year={2025},
      eprint={2510.19452},
      archivePrefix={arXiv},
      primaryClass={cs.DM},
      url={https://arxiv.org/abs/2510.19452}, 
}
\end{document}